\numberwithin{equation}{section}
\theoremstyle{theorem}
\newtheorem{theorem}{Theorem}[section]
\newtheorem{proposition}[theorem]{Proposition}
\newtheorem{lemma}[theorem]{Lemma}
\newtheorem{corollary}[theorem]{Corollary}
\newtheorem{conjecture}[theorem]{Conjecture}
\newtheorem{problem}[theorem]{Problem}
\theoremstyle{definition}
\theoremstyle{remark}
\newcommand{\Z}{\mathbb{Z}}
\newcommand{\R}{\mathbb{R}}
\newcommand{\C}{\mathbb{C}}
\newcommand{\interior}{\operatorname{int}}
\title{On the embeddability of skeleta of manifold triangulations}
\author{Daisuke Kishimoto}
\address{Faculty of Mathematics, Kyushu University, Fukuoka 819-0395, Japan}
\email{kishimoto@math.kyushu-u.ac.jp}
\author{Takahiro Matsushita}
\address{Department of Mathematical Sciences, Shinshu University, Matsumoto, Nagano 390-8621, Japan}
\email{matsushita@shinshu-u.ac.jp}
\date{\today}
\subjclass[2020]{
57Q35, 
57N35, 
57R40 
}
\keywords{Embedding, manifold triangulation, skeleton}
\begin{document}

\maketitle

\begin{abstract}
We show a criterion for a skeleton of a manifold triangulation being embeddable into Euclidean space in terms of the complement of a submanifold. As an application, we obtain embeddability of a $(q-1)$-skeleton of a triangulation of an $S^p$-bundle over $S^q$ into $\R^{p+q}$.
\end{abstract}


\section{Introduction}\label{Introduction}

Throughout the paper, manifolds and submanifolds will be smooth and boundaryless, and an embedding will mean a homeomorphism into unless otherwise specified.

Recall that the van Kampen-Flores theorem \cite{F, vK} states that the $d$-skeleton of a $(2d+2)$-simplex is not embeddable into $\R^{2d}$, where every $d$-dimensional simplicial complex is embeddable into $\R^{2d+1}$. One can interpret this as the nonembeddability of the $d$-skeleton of a very special triangulation of a $(2d+1)$-sphere, and Nevo and Wagner \cite{NW} later proved that the $d$-skeleton of any triangulation of a $(2d+1)$-sphere is nonembeddable into $\R^{2d}$. Then one may naively ask:

\begin{problem}
  Which connected closed manifold triangulations have the $d$-skeleta embeddable into $\R^{2d}$?
\end{problem}

Let $M$ be a manifold. According to the van Kampen-Flores theorem, if the dimension of $M$ is at least $2d+2$, then the $d$-skeleton of every triangulation of $M$ is nonembeddable into $\mathbb{R}^{2d}$. In \cite{KM1, KM2}, the authors showed that when the dimension of $M$ is $2d+1$, if $M$ is either a $\mathbb{Z}/2$-homology sphere (cf. \cite{HKTT}) or the total Stiefel-Whitney class $w(M)$ is nontrivial, then the $d$-skeleton of any triangulation of $M$ is nonembeddable into $\mathbb{R}^{2d}$. We also showed that the $d$-skeleton of any triangulation of a closed orientable $2d$-manifold $M$ with odd Euler characteristic is nonembeddable into $\R^{2d}$. When $d=1$, the Euler characteristic condition can be weakened because there is a folklore fact in topological graph theory that the $1$-skeleton of any triangulation of a closed surface of genus $\ge 1$ is nonembeddable into $\mathbb{R}^{2}$ (see Section 1 of \cite{KM2} for example).


Note that all the results above are on nonembeddability, and that in these examples the entire manifold $M$ is nonembeddable into $\R^{2d}$. The reason for this is obvious: if the closed manifold $M$ is embeddable into $\mathbb{R}^{2d}$, then the $d$-skeleton of any triangulation of $M$ is embeddable into $\mathbb{R}^{2d}$. Then the next problem is whether the converse holds true or not. Namely, if $M$ is nonembeddable into $\mathbb{R}^{2d}$, can we also say that the $d$-skeleton of any triangulation of $M$ is also nonembeddable into $\mathbb{R}^{2d}$? However, there is a trivial counterexample: the $2d$-dimensional sphere is nonembeddable into $\mathbb{R}^{2d}$, but the $d$-skeleton of any triangulation of it can obviously be embedded into $\mathbb{R}^{2d}$. In particular, when $\dim M=2$ and $d=1$, the embeddability of the $d$-skeleta of triangulations of closed surfaces are completely solved because the $1$-skeleton of any triangulation of a surface of genus $\ge 1$ is nonembeddable into $\R^2$, as mentioned above. Thus in this paper, we mainly consider this problem when $M$ is a closed manifold of dimension $2d$ for $d\ge 2$, in which case $M$ is nonembeddable into $\R^{2d}$. We will prove a criterion for embeddability of the $d$-skeleton of any smooth triangulation of $M$ into $\R^{2d}$ in terms of the complement of a submanifold of $M$. This will yield a variety of affirmative examples to the problem. In particular, in contrast to the $d = 1$ case, we show that there are infinitely many examples of closed $2d$-manifolds having triangulations whose $d$-skeleta are embeddable into $\R^{2d}$ (Corollary \ref{corollary infinite examples}).

To state our main result precisely, we prepare some notion. We say that a simplicial complex $K$ is a \emph{smooth triangulation} of a manifold $M$ if there is a homeomorphism $K\to M$ which restricts to a smooth embedding on each simplex of $K$. The classical result of Whitehead \cite{W} shows that every manifold has a smooth triangulation, where the original Whitehead's result is for $C^1$-triangulations but the proof works verbatim for smooth triangulations.

\begin{theorem}
  \label{main}
  Let $M$ be an $n$-manifold. If there is a $p$-dimensional compact submanifold $L$ of $M$ such that $M-L$ is embeddable into $\R^m$, then for any smooth triangulation $K$ of $M$, the $(n-p-1)$-skeleton $K_{n-p-1}$ is embeddable into $\R^m$.
\end{theorem}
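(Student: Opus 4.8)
The plan is to \emph{move the submanifold $L$}, rather than the skeleton, by a general-position argument exploiting that $L$ and $K_{n-p-1}$ have complementary dimensions: $\dim L + \dim K_{n-p-1} = p + (n-p-1) = n-1 < n = \dim M$. Thus a generic small perturbation of $L$ should make it disjoint from (the image of) $K_{n-p-1}$, after which $K_{n-p-1}$ lands in a diffeomorphic copy of $M-L$ and hence in $\R^m$.

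In detail, I would first fix a smooth triangulation $K \to M$ and, abusing notation, regard $K_{n-p-1}$ as a subset of $M$ which on each simplex is a smoothly embedded simplex. Since $L$ is compact, choose a compact neighborhood $W$ of $L$ in $M$; only finitely many simplices of $K$ meet $W$. For each closed simplex $\sigma$ of $K_{n-p-1}$ meeting $W$, the smooth embedding $\sigma \hookrightarrow M$ extends to a smooth embedding of an open neighborhood of $\sigma$ in its affine hull, so $\sigma$ lies in an embedded open submanifold $\widetilde\sigma \subseteq M$ with $\dim \widetilde\sigma = \dim\sigma \le n-p-1$. Because $\dim L + \dim\widetilde\sigma < \dim M$, general position yields an isotopy of embeddings $i_t\colon L \hookrightarrow M$ with $i_0$ the inclusion, supported in $W$, such that $i_1(L)$ is disjoint from every such $\widetilde\sigma$; as the simplices not meeting $W$ also lie outside $i_1(L)\subseteq W$, this gives $i_1(L)\cap K_{n-p-1} = \emptyset$.

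To finish, I would invoke the isotopy extension theorem (valid since $L$ is compact) to extend $i_t$ to an ambient isotopy of $M$, obtaining a diffeomorphism $h\colon M\to M$ with $L' := h(L) = i_1(L)$ disjoint from $K_{n-p-1}$. Then $h$ restricts to a diffeomorphism $M - L \to M - L'$, so $M-L'$ embeds into $\R^m$ by hypothesis, and composing the inclusion $K_{n-p-1}\subseteq M-L'$ with such an embedding produces the desired embedding $K_{n-p-1}\hookrightarrow\R^m$.

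The main obstacle is the general-position step, since $K_{n-p-1}$ is a complex rather than a manifold and transversality to $L$ is not literally defined for it. I expect this to be handled by carrying out the perturbation one simplex at a time — here it is crucial that the triangulation is \emph{smooth}, so each closed simplex sits inside an embedded open submanifold of the same dimension — together with the compactness of $L$, which reduces the matter to finitely many simplices and to a finite intersection of residual (hence nonempty) sets of perturbations.
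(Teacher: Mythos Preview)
Your proposal is correct and follows essentially the same route as the paper: perturb the inclusion $L\hookrightarrow M$ to an embedding transverse to (hence disjoint from) each simplex of $K_{n-p-1}$, then apply the isotopy extension theorem to obtain a diffeomorphism $M-L\cong M-L'$ with $K_{n-p-1}\subset M-L'$. The paper differs only in minor technical bookkeeping---it takes $\mathrm{Int}(\sigma)$ as the target submanifold for each simplex, intersects countably many residual subsets of $C^\infty(L,M)$ via Baire rather than cutting down to finitely many simplices near $L$, and explicitly invokes the local path-connectedness of $\mathrm{Emb}(L,M)$ to pass from a nearby transverse embedding to an isotopy.
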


We show an application example of Theorem \ref{main}.

\begin{corollary}
  \label{sphere bundle}
  Let $M$ be an $S^p$-bundle over $S^q$. The $(q-1)$-skeleton of any smooth triangulation of $M$ is embeddable into $\R^{p+q}$.
\end{corollary}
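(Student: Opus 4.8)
The plan is to apply Theorem~\ref{main} with $n = p+q = \dim M$ and $m = p+q$, taking $L$ to be a single fiber $F = \pi^{-1}(b_0)$ of the bundle projection $\pi\colon M \to S^q$. Such a fiber is homeomorphic to $S^p$, hence a compact submanifold of $M$ of dimension $p$ (smooth, once the bundle is taken smooth, as we may). For these parameters Theorem~\ref{main} asserts that $K_{n-p-1} = K_{q-1}$ is embeddable into $\R^{p+q}$ for every smooth triangulation $K$ of $M$, which is exactly the claim. So the whole argument reduces to verifying the hypothesis of Theorem~\ref{main}: that $M - F$ is embeddable into $\R^{p+q}$.

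I would do this in two steps. First, since $F = \pi^{-1}(b_0)$, we have $M - F = \pi^{-1}(S^q - \{b_0\})$, so $\pi$ restricts to an $S^p$-bundle over $S^q - \{b_0\} \cong \R^q$. As a fiber bundle over a contractible paracompact base is trivial, this restriction is trivial, yielding a homeomorphism $M - F \cong \R^q \times S^p$. Second, I would embed $\R^q \times S^p$ into $\R^{p+q}$ using the observation that $\R \times S^p$ is homeomorphic to $\R^{p+1} - \{0\}$ (for instance via $(t,x) \mapsto e^t x$, where $S^p \subset \R^{p+1}$ is the unit sphere), which is an open subset of $\R^{p+1}$; taking the product with $\R^{q-1}$ then identifies $\R^q \times S^p \cong \R^{q-1} \times (\R^{p+1} - \{0\})$ with an open subset of $\R^{p+q}$. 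Composing the two identifications gives the required embedding $M - F \hookrightarrow \R^{p+q}$, and the corollary follows at once from Theorem~\ref{main}.

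There is no genuinely hard step here; the content is a short deduction from Theorem~\ref{main}. The only points that need a moment's thought are the triviality of the restricted bundle over the contractible base $S^q - \{b_0\}$ and the mildly cute fact that $\R^q \times S^p$, although it does not visibly sit inside $\R^{p+q}$, is nonetheless homeomorphic to an open subset of it. One should also record the standing assumption $q \ge 1$, so that $\R^{q-1}$ makes sense and $S^q$ is connected; the degenerate case $q=0$ is trivial.
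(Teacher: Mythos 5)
Your proposal is correct and follows essentially the same route as the paper: both take $L$ to be a fiber $\pi^{-1}(x)\cong S^p$, identify $M-L$ with $\R^q\times S^p$ via triviality of the bundle over the contractible punctured base, and embed it into $\R^{p+q}$ by the same $(t,y)\mapsto e^t y$ device (the paper writes it as $((x_1,\ldots,x_q),y)\mapsto(e^{x_1}y,x_2,\ldots,x_q)$), then invoke Theorem~\ref{main}. The only difference is that you spell out the triviality of the restricted bundle and the $q\ge 1$ convention, which the paper leaves implicit.
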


\begin{proof}
  Let $\pi\colon M\to S^q$ denote the projection, and take any point $x\in S^q$. Then $L=\pi^{-1}(x)\cong S^p$ is a submanifold of $M$ such that $M-L\cong\R^q\times S^p$. Now there is an embedding
  \[
    \R^q\times S^p\to\R^{p+q},\quad((x_1,\ldots,x_q),y)\mapsto(e^{x_1}y,x_2,\ldots,x_q).
  \]
  Then by Theorem \ref{main}, the statement follows.
\end{proof}

Note that if $p+q=2d$ and $p<q$, then the $d$-skeleton of any smooth triangulation of an $S^p$-bundle over $S^q$ is embeddable into $\R^{2d}$ by Corollary \ref{sphere bundle}.
Recall that the $(2i)$-th Stiefel-Whitney class of $\C P^d$ is given by
\[
  w_{2i}(\C P^d)=\binom{d+1}{i}x^i
\]
where $x$ is the generator of $H^2(\C P^d;\Z/2)\cong\Z/2$. Then $w_{2i}(\C P^d)\ne 0$ for some $i\ge 1$ unless $d=2^k-1$. Hence by \cite[Theorem 1.2]{KM2}, the $d$-skeleton of any triangulation of $\C P^d$ is nonembeddable into $\R^{2d}$ unless $d=2^k-1$ for some $k\ge 1$. On the other hand, by the above observation, the $1$-skeleton of any triangulation of $\C P^1=S^2$ is embeddable into $\R^2$, and by Corollary \ref{sphere bundle}, the $3$-skeleton of any smooth triangulation of $\C P^3$ is embeddable into $\R^6$ as it is an $S^2$-bundle over $S^4$. Then we pose:

\begin{conjecture}
  The $d$-skeleton of any triangulation of $\C P^d$ is embeddable into $\R^{2d}$ whenever $d=2^k-1$ for some $k\ge 1$.
\end{conjecture}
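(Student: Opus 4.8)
The plan is to obtain the conjecture from Theorem~\ref{main}: since $\dim\C P^{d}=2d$, it suffices to find, for each $d=2^{k}-1$, a compact submanifold $L\subset\C P^{d}$ with $\dim L\le d-1$ such that $\C P^{d}-L$ embeds into $\R^{2d}$, because then $2d-\dim L-1\ge d$ and the $d$-skeleton of any smooth triangulation of $\C P^{d}$, being a subcomplex of $K_{2d-\dim L-1}$, embeds into $\R^{2d}$. For $k=1$ take $L$ a point in $\C P^{1}=S^{2}$, and for $k=2$ this is Corollary~\ref{sphere bundle} applied to $\C P^{3}=S^{7}/S^{1}\to S^{7}/S^{3}=S^{4}$, whose fiber $L\cong S^{3}/S^{1}=S^{2}$ has $\C P^{3}-L\cong\R^{4}\times S^{2}$, and $\R^{4}\times S^{2}$ embeds into $\R^{6}$ by thickening $S^{2}\subset\R^{3}$. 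The real content is the range $k\ge3$.

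Here, however, Theorem~\ref{main} is of no help. If $k\ge3$ and $L\subset\C P^{d}$ is compact with $\dim L\le d-1$, then $H^{4}(\C P^{d},\C P^{d}-L;\Q)\cong H_{2d-4}(L;\Q)=0$ by Alexander duality ($2d-4>d-1\ge\dim L$, as $k\ge3$), so the restriction $H^{4}(\C P^{d};\Q)\to H^{4}(\C P^{d}-L;\Q)$ is injective; hence the first Pontryagin class $p_{1}(\C P^{d})=(d+1)x^{2}=2^{k}x^{2}$, with $x\in H^{2}(\C P^{d})$ a generator, restricts to a nonzero rational class on $\C P^{d}-L$. But a homeomorphism of the $2d$-manifold $\C P^{d}-L$ into $\R^{2d}$ has open image by invariance of domain, so $\C P^{d}-L$ would be homeomorphic to an open subset of $\R^{2d}$, whose rational Pontryagin classes vanish (being parallelizable) --- and rational Pontryagin classes are topological invariants --- a contradiction. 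In particular no linear $L=\C P^{(d-1)/2}$ works, and for $k\ge3$ one must embed the skeleton $K_{d}$ itself rather than a manifold complement.

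The natural tool for that is the Haefliger--Weber deleted-product criterion. For $d\ge3$ one has $4d\ge3(d+1)$, so $\R^{2d}$ lies in the range where $K_{d}$ embeds into $\R^{2d}$ if and only if there is a $\Z/2$-equivariant map from the deleted product $\{(x,y)\in K_{d}\times K_{d}\mid x\neq y\}$ to $S^{2d-1}$; and since this deleted product has dimension $\le2d$ while $S^{2d-1}$ is $(2d-2)$-connected, the obstruction to such a map is a single cohomology class in degree $2d$. The plan is: (i) recall from \cite{KM1,KM2} that whenever some $w_{2i}(M)\neq0$ this obstruction is forced to be nonzero, so that the vanishing of $w(M)$ --- which by $w_{2i}(\C P^{d})=\binom{d+1}{i}x^{i}$ is precisely the case $d=2^{k}-1$ --- is the natural candidate for the obstruction to vanish; (ii) prove that for $d=2^{k}-1$ the obstruction vanishes \emph{in full}, not merely the characteristic-class shadow of it; and (iii) do this uniformly over all triangulations. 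One might attempt (ii) and (iii) by feeding the $S^{2}$-bundle structure $\C P^{2^{k}-1}=S^{2^{k+1}-1}/S^{1}\to\mathbb{HP}^{2^{k-1}-1}$ into a relative version of the argument, or by building an explicit PL embedding of one preferred triangulation and then controlling the change of obstruction under subdivision.

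The main obstacle is steps (ii) and (iii). The computations of \cite{KM1,KM2} only pin down the part of the deleted-product obstruction that is visible to the mod~$2$ cohomology of $M$, whereas a priori the full obstruction class of $K_{d}$ may receive further, triangulation-dependent contributions that the Stiefel--Whitney classes of $M$ do not detect. Showing that these all vanish for $\C P^{2^{k}-1}$ and for every triangulation --- equivalently, that the deleted-product obstruction of the $d$-skeleton of a triangulated closed $2d$-manifold depends only on $M$ and dies whenever the Wu classes of $M$ vanish --- is where the genuine difficulty lies, and is why the statement is posed only as a conjecture.
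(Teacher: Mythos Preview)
The statement you were asked to prove is a \emph{conjecture}: the paper does not prove it, and offers no proof sketch beyond verifying the cases $k=1$ (trivial, $\C P^1=S^2$) and $k=2$ (via Corollary~\ref{sphere bundle}, since $\C P^3$ is an $S^2$-bundle over $S^4$). Your write-up correctly recognises this, reproduces the paper's treatment of $k=1,2$, and does not claim a proof for $k\ge3$; so there is no discrepancy to flag between your argument and the paper's.

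Your middle paragraph, however, goes genuinely beyond the paper. You show that for $k\ge3$ the paper's own Theorem~\ref{main} \emph{cannot} settle the conjecture: any compact $L\subset\C P^d$ with $\dim L\le d-1$ leaves $H^4(\C P^d;\Q)\to H^4(\C P^d-L;\Q)$ injective (via Poincar\'e--Lefschetz duality, using $2d-4>d-1$ for $d\ge7$), so the nonzero class $p_1(\C P^d)=2^k x^2$ survives on $\C P^d-L$; but a topological embedding of a $2d$-manifold into $\R^{2d}$ has open image by invariance of domain, and Novikov's theorem then forces the rational Pontryagin classes of $\C P^d-L$ to vanish. This is a correct and useful obstruction, and it explains concretely why the paper's method stops at $k=2$.

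Your final paragraph is an honest outline of a possible Haefliger--Weber attack, with the key difficulty (triangulation-independence of the deleted-product obstruction beyond its Stiefel--Whitney shadow) clearly isolated. That is exactly the state of affairs: the conjecture is open, and you have correctly located where the work would have to be done.
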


We will prove the following criterion, which can be used to generate an infinite number of examples to which Theorem~\ref{main} can be applied.

\begin{proposition}
  \label{connected sum}
  For $i=1,\ldots,k$, let $L_i$ be a $p$-dimensional submanifold of a connected $n$-manifold $M_i$ such that $M_i-L_i$ is embeddable into $\R^n$. Then there is a $p$-dimensional compact submanifold $L$ of the connected sum $M_1\#\cdots\# M_k$ such that $M_1\#\cdots\# M_k-L$ is embeddable into $\R^n$.
\end{proposition}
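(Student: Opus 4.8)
The plan is to take for $L$ the disjoint union $L_1\sqcup\cdots\sqcup L_k$, viewed inside the connected sum, and to embed the complement by recognizing it as a connected sum of open subsets of $\R^n$ and placing that connected sum inside $S^n$ so as to omit a point. First I would choose, for each $i$, a closed coordinate $n$-ball $B_i$ in $M_i-L_i$; since $L_i$ is compact it is closed in $M_i$, so $M_i-L_i$ is open in $M_i$, and we may assume it is nonempty (the claim being trivial otherwise). Form $N:=M_1\#\cdots\#M_k$ using these balls: delete the interiors $\mathring{B_i}$, and glue the boundary spheres $\partial B_i$ to the $k$ boundary spheres of a connector $C:=S^n\setminus(\text{union of }k\text{ disjoint open balls})$ along homeomorphisms $\gamma_i$ to be fixed below. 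Since $L_i$ is disjoint from $B_i$, it survives as a submanifold of $M_i-\mathring{B_i}\subset N$, and these copies are pairwise disjoint, so $L:=L_1\sqcup\cdots\sqcup L_k$ is a compact $p$-dimensional submanifold of $N$.

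The first point is that $N-L\cong(M_1-L_1)\#\cdots\#(M_k-L_k)$, the connected sum formed at the same balls $B_i$, with the same connector $C$ and the same gluings $\gamma_i$: deleting $\bigsqcup_i L_i$ from $N$ merely deletes $L_i$ from each summand $M_i-\mathring{B_i}$, leaving $(M_i-L_i)-\mathring{B_i}$ glued to $C$. Writing $U_i:=M_i-L_i$, which is an $n$-manifold, and identifying it with its image under the given embedding --- an open subset of $\R^n$, by invariance of domain --- it remains to embed a connected sum $U_1\#\cdots\#U_k$ of open subsets of $\R^n$ into $\R^n$.

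For this I would pass to $S^n=\R^n\cup\{\infty\}$ and invoke the generalized Schoenflies theorem, which applies because each $\partial B_i$ is a bicollared $(n-1)$-sphere in $\R^n\subset S^n$: there is a homeomorphism $\Phi_i\colon S^n\to S^n$ carrying $B_i$ onto a fixed closed hemisphere $D_-$, hence carrying the manifold-with-boundary $U_i-\mathring{B_i}$ into the opposite hemisphere $D_+=S^n\setminus\mathring{D_-}$, taking $\partial B_i$ onto $\partial D_+$ and avoiding the point $\Phi_i(\infty)\in D_+$ (as $\infty\notin U_i$). Now fix $k$ pairwise disjoint closed $n$-balls $E_1,\dots,E_k\subset S^n$, compose $\Phi_i$ with a homeomorphism $D_+\cong E_i$ to obtain an embedding $\psi_i\colon U_i-\mathring{B_i}\hookrightarrow E_i$, take $C:=S^n\setminus\bigcup_i\mathring{E_i}$ for the connector, and let $\gamma_i\colon\partial B_i\to\partial E_i$ be the restriction of $\psi_i$ --- used both here and in the construction of $N$. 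Then $N-L$ is homeomorphic to the subspace $C\cup\bigcup_i\psi_i(U_i-\mathring{B_i})$ of $S^n$, which omits $k$ points (one in each $E_i$, the image of $\Phi_i(\infty)$); since $S^n$ with one point removed is $\R^n$, this subspace embeds into $\R^n$, which proves the proposition. I expect the only delicate steps to be the appeal to generalized Schoenflies to put the coordinate balls in standard position --- needed because the embeddings here are merely topological --- together with the slightly fussy bookkeeping ensuring that the gluing maps used to build $N$ coincide with those appearing in the embedded picture; the rest is formal.
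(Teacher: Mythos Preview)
Your argument is correct and follows essentially the same route as the paper's: take the connected-sum balls disjoint from the $L_i$, set $L=L_1\sqcup\cdots\sqcup L_k$, and use the generalized Schoenflies theorem to put each $(M_i-L_i)\setminus\mathring{B}_i$ into a standard ball in $S^n$ so that the pieces glue to an embedding missing a point. The only differences are cosmetic---the paper reduces to $k=2$ and uses the two hemispheres directly, while you treat general $k$ with a $k$-holed connector and are more explicit about matching the gluing maps and about the missing point that lets you pass from $S^n$ to $\R^n$.
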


By Corollary \ref{sphere bundle}, if $n = 2d = p + q$ and $p<q$, then we may set $(M_i,L_i)=(S^p \times S^q, S^p \times *)$ for $i=1,2,\ldots,k$ in Proposition \ref{connected sum}, hence we conclude the following, where we must have $d\ge 2$:

\begin{corollary} \label{corollary infinite examples}
For $d \ge 2$, there are infinitely many closed $2d$-manifolds having triangulations whose $d$-skeleta are embeddable into $\R^{2d}$.
\end{corollary}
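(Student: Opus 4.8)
The plan is to derive Corollary \ref{corollary infinite examples} as an immediate consequence of Proposition \ref{connected sum} together with Corollary \ref{sphere bundle}, essentially following the remark already made in the excerpt. First I would fix $d \ge 2$ and choose a splitting $d = p + q$ with $p \ne q$ (for instance $p = 1$, $q = d-1$, noting $d - 1 \ge 1$; the requirement $p \ne q$ forces $d$ to be such a nontrivial sum, which is exactly why $d \ge 2$ is needed and why $d=1$ is excluded). By Corollary \ref{sphere bundle} applied to the trivial bundle $M = S^p \times S^q$ over $S^q$, the $(q-1)$-skeleton of any smooth triangulation of $S^p \times S^q$ embeds into $\R^{p+q} = \R^{d}$; but what I actually need is an embedding of $M_i - L_i$ into $\R^{2d}$, so I would instead directly invoke the construction inside the proof of Corollary \ref{sphere bundle}: taking $L_i = \pi^{-1}(x) \cong S^q$ a fiber (a $q$-dimensional, hence also usable as a $p$-dimensional after adjusting roles — I would pick the factor so that the removed submanifold has the dimension matching the skeleton I want), we get $M_i - L_i \cong \R^q \times S^p$ which embeds into $\R^{p+q} \subseteq \R^{2d}$.

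Next I would apply Proposition \ref{connected sum} with all $(M_i, L_i)$ equal to this fixed pair $(S^p \times S^q, L)$ for $i = 1, \ldots, k$ and with the ambient dimension $n = 2d$; since each $M_i - L_i$ embeds into $\R^{2d}$, the proposition yields a $p$-dimensional compact submanifold $L \subseteq M_1 \# \cdots \# M_k$ with complement embeddable into $\R^{2d}$. Then Theorem \ref{main}, with $n = 2d$, this $p$, and $m = 2d$, shows that the $(n - p - 1) = (2d - p - 1) = (q - 1)$-skeleton of any smooth triangulation of the connected sum is embeddable into $\R^{2d}$ — but I want the $d$-skeleton, so here I must be careful about which of $p, q$ plays which role. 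To get the \emph{$d$}-skeleton I need $n - p - 1 = d$, i.e. $p = d - 1$, and then $L_i$ should be a $(d-1)$-dimensional submanifold of $S^{d-1} \times S^{1}$ (or more safely, of an $S^{d-1}$-bundle over $S^1$, but the product works): namely $L_i = \{*\} \times S^1$... no — the fiber in Corollary \ref{sphere bundle} is a sphere of the base dimension. I would therefore use $M_i = S^{p'} \times S^{q'}$ with $q' = d$ large enough? That overshoots the dimension $2d$.

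Let me reorganize: the cleanest route is to take $M_i = S^p \times S^q$ with $p + q = 2d$ and $p \ne q$, the fiber $L_i \cong S^q$ so that $M_i - L_i \cong \R^q \times S^p \hookrightarrow \R^{p+q} = \R^{2d}$ by the formula in the proof of Corollary \ref{sphere bundle}. Here $L_i$ is $q$-dimensional; Proposition \ref{connected sum} (stated for $p$-dimensional submanifolds, so just rename $q \rightsquigarrow p$) gives a $q$-dimensional compact $L \subseteq M_1 \# \cdots \# M_k$ with $M_1 \# \cdots \# M_k - L$ embeddable into $\R^{2d}$. Then Theorem \ref{main} with $n = 2d$, submanifold dimension $q$, and $m = 2d$ gives that the $(2d - q - 1) = (p - 1)$-skeleton embeds into $\R^{2d}$; choosing the labelling with $p = d + 1$, $q = d - 1$ (valid for $d \ge 2$, and $p \ne q$), this is precisely the $d$-skeleton. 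Finally I would observe that as $k$ ranges over the positive integers the connected sums $\#^k (S^{d+1} \times S^{d-1})$ are pairwise non-homeomorphic — e.g. their rank of $H_{d-1}$ (or Betti numbers) grows with $k$ — so this produces infinitely many distinct closed $2d$-manifolds, completing the proof. The only genuine subtlety, and the one point I would double-check, is the bookkeeping of which sphere factor is removed and hence which skeleton dimension Theorem \ref{main} delivers; everything else is a direct citation of the results already established.
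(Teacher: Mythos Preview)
Your proposal is correct and follows essentially the same route as the paper: take $(M_i,L_i)=(S^p\times S^q,\ \text{a sphere factor})$ with $p+q=2d$ and $p\ne q$, feed this into Proposition~\ref{connected sum} and then Theorem~\ref{main}, and let $k$ vary. Your write-up is in fact more explicit than the paper's---you pin down $p=d+1$, $q=d-1$ so that Theorem~\ref{main} yields exactly the $d$-skeleton, and you verify that the $\#^k(S^{d+1}\times S^{d-1})$ are pairwise non-homeomorphic---though you should fix the small labeling slip where you call $L_i\cong S^q$ the ``fiber'' yet write $M_i-L_i\cong \R^q\times S^p$ (it is $\R^p\times S^q$; the subsequent dimension count is unaffected).
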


\subsection*{Acknowledgement}

The authors thank the anonymous referee for useful comments, which improved the readability of the manuscript. The first author was supported by JSPS KAKENHI Grant Number JP22K03284, and the second author was supported by JSPS KAKENHI Grant Number JP23K12975.


\section{Proof}


First, we recall the transversality theorem (Theorem~\ref{transversality theorem}).
Let $M$ and $N$ be manifolds. Let $C^\infty(M,N)$ denote the space of smooth maps from $M$ to $N$, equipped with the $C^\infty$-topology. Let $L$ be a submanifold of $N$. Recall that a smooth map $f\colon M\to N$ is \emph{transverse} to $L$ if for any $x\in M$ with $f(x)\in L$, one has
\[
  T_{f(x)}N=f_*(T_xM)+T_{f(x)}L.
\]
Observe that for $\dim M+\dim L<\dim N$, a smooth map $f\colon M\to N$ is transverse to $L$ if and only if $f(M)\cap L=\emptyset$. Recall that a subset of a topological space is called \emph{residual} if it contains the intersection of countably many dense open sets. It is well known that $C^\infty(M,N)$ is a Baire space, that is, the intersection of countably many residual sets of $C^\infty(M,N)$ is dense.

\begin{theorem}
  [{see \cite[Chapter~3, Theorem~2.1 (a), p.74]{H}}]
  \label{transversality theorem}
  Let $M$ and $N$ be manifolds, and let $L$ be a submanifold of $N$. Then the subset of $C^\infty(M,N)$ consisting of transverse maps to $L$ is residual.
\end{theorem}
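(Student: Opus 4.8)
The plan is to prove this by the classical route: reduce transversality to a regular-value condition, apply Sard's theorem in parametrized form, and then globalize using the Baire property of $C^\infty(M,N)$. \textbf{Step 1 (local reformulation).} Around any point of $L$ there is a chart $U$ of $N$, $U\cong\R^{\ell}\times\R^{c}$ with $\ell=\dim L$ and $c=\dim N-\ell$, in which $L\cap U$ is identified with $\R^{\ell}\times\{0\}$; let $\phi_U\colon U\to\R^{c}$ be the second projection, so $\phi_U$ is a submersion and $L\cap U=\phi_U^{-1}(0)$. A smooth map $f\colon M\to N$ is transverse to $L$ at a point $x$ with $f(x)\in L\cap U$ if and only if $0$ is a regular value of $\phi_U\circ f$ near $x$. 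This puts the whole question into the domain of Sard's theorem.

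\textbf{Step 2 (parametric transversality).} Next I would establish: if $S$ is a manifold, $F\colon M\times S\to N$ is smooth and transverse to $L$, then $\{\,s\in S:F(\cdot,s)\text{ is transverse to }L\,\}$ has full measure, hence is dense. The argument is standard: $F^{-1}(L)$ is a submanifold of $M\times S$ (by Step 1, cut out locally by $\phi_U\circ F$), Sard's theorem applied to its projection to $S$ gives a full-measure set of regular values, and a short linear-algebra lemma shows that for each such $s$ the slice $F(\cdot,s)$ is transverse to $L$. \textbf{Step 3 (a transverse family through $f$).} Given $f$, a chart $V\cong\R^{n}$ of $M$, and a chart $U$ of $N$ as in Step 1 with (after shrinking) $f(V)\subset U$, take $F(x,s)=f(x)+\rho(x)\,s$ with $\rho$ a bump function supported in $V$ and $s$ in a small ball of $\R^{c}$, the sum computed in the coordinates of $U$. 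Over the region where $\rho\equiv 1$, the partial derivative of $\phi_U\circ F$ in $s$ is the identity on $\R^{c}$, so $F$ is transverse to $L$ there; by Step 2, arbitrarily small $s$ makes $f_s=F(\cdot,s)$ transverse to $L$ on a prescribed compact subset of that region, while $f_s=f$ outside $V$.

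\textbf{Step 4 (globalization).} Choose a countable family of charts $V_i$ of $M$ with compact $A_i\subset V_i$ whose interiors cover $M$, and a countable family of charts covering $L$, paired so that Step 3 applies. Let $R_i\subset C^\infty(M,N)$ be the set of maps transverse to $L$ at every point of $A_i$. Transversality on a compact set is an open condition, so each $R_i$ is open; and each $R_i$ is dense by Step 3, where when treating $V_i$ one chooses the perturbation of $f$ small enough not to destroy transversality already achieved on $A_1,\dots,A_{i-1}$, which is possible by openness. The set of maps transverse to $L$ is exactly $\bigcap_i R_i$, hence residual, and dense by the Baire property.

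\textbf{Expected main obstacle.} The analytic core — Sard plus the parametric lemma — is routine once Step 1 is in place. The delicate part is the globalization bookkeeping: checking that ``transverse to $L$ on a compact set'' is genuinely open in the $C^\infty$-topology (which needs care when $M$ is noncompact or $L$ is not closed in $N$), and controlling the successive small perturbations so that they fit together and do not undo earlier progress.
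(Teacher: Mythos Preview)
The paper does not prove this statement at all: Theorem~\ref{transversality theorem} is merely quoted from Hirsch's \emph{Differential Topology} \cite[2.1 Transversality Theorem, p.~74]{H} as a standard result, and is then used as a black box in the proof of Theorem~\ref{main}. Your proposal is the classical textbook argument (Sard plus parametric transversality plus Baire globalization), which is exactly the route taken in the reference the paper cites, so in that sense your approach coincides with the intended one. Your caveat about openness when $L$ is not closed in $N$ is well taken --- Hirsch's formulation in fact builds in the appropriate hypotheses --- but for the application in this paper ($L$ compact, $N$ closed) no such subtlety arises.
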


Second, we recall the isotopy extension theorem (Theorem~\ref{isotopy extension theorem}). Let $M$ and $N$ be manifolds. An \emph{isotopy} is a smooth map $h_t\colon M\times[0,1]\to N$ such that $h_t$ is a smooth embedding for each $t\in[0,1]$.

\begin{theorem}
  [{see \cite[Chapter~8, Theorem~1.3, p.180]{H}}]
  \label{isotopy extension theorem}
  Let $M$ be a compact submanifold of a manifold $N$. Then any isotopy $h \colon M \times [0,1] \to N$, $(x,t) \mapsto h_t(x)$ with $h_0$ being the inclusion of $M$ into $N$ extends to a smooth map $H \colon N \times [0,1] \to N$, $(x,t) \mapsto H_t(x)$ such that $H_0 = {\rm id}_N$ and $H_t$ is a diffeomorphism of $N$ for each $t\in[0,1]$.
\end{theorem}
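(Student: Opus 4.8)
The plan is to reduce the extension problem to integrating a time-dependent vector field, following the classical flow argument. First I would pass to the \emph{track} of the isotopy: define $\hat{h}\colon M\times[0,1]\to N\times[0,1]$ by $\hat{h}(x,t)=(h_t(x),t)$. Since each $h_t$ is a smooth embedding and the second coordinate is simply $t$, the map $\hat{h}$ is itself a smooth embedding, so its image $W=\hat{h}(M\times[0,1])$ is a compact submanifold of $N\times[0,1]$. Here the compactness of $M$ is used for the first time, and it is precisely the hypothesis that makes the whole argument go through.

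Next I would produce the infinitesimal generator of the isotopy along $W$. Pushing forward the coordinate field $\partial/\partial t$ on $M\times[0,1]$ by $\hat{h}$ yields a vector field $V$ defined along $W$; at the point $\hat{h}(x,t)=(h_t(x),t)$ it equals $\left(\tfrac{\partial}{\partial t}h_t(x),\,1\right)$, so its $[0,1]$-component is identically $1$ and it is tangent to $W$. The key step is to extend $V$ to a global vector field $\tilde V$ on $N\times[0,1]$ that still has $[0,1]$-component identically $1$ and whose $N$-component is compactly supported. I would do this by choosing a tubular neighborhood of the compact submanifold $W$, extending the $N$-component of $V$ over this neighborhood, multiplying by a bump function equal to $1$ near $W$ and supported inside the neighborhood, and then setting $\tilde V=(\text{extended }N\text{-component},\,\partial/\partial t)$, extended by $(0,\partial/\partial t)$ outside the neighborhood.

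Finally I would integrate $\tilde V$. Since its $[0,1]$-component is $1$ and its $N$-component is compactly supported, every integral curve has linearly increasing time coordinate and spatial coordinate confined to a compact set, so the flow exists for all $t\in[0,1]$; write $\Phi_t$ for the time-$t$ flow. Defining $H_t(y)$ to be the $N$-coordinate of $\Phi_t(y,0)$ gives a smooth map $H\colon N\times[0,1]\to N$ with $H_0=1_N$, and each $H_t$ is a diffeomorphism because it is the spatial part of a complete flow (its inverse is obtained by flowing backward from the slice $N\times\{t\}$ to $N\times\{0\}$). The compatibility $H_t|_M=h_t$ follows from the fact that $\tilde V$ is tangent to $W$: the integral curve of $\tilde V$ through $\hat{h}(x,0)=(x,0)$ stays in $W$ and must be $t\mapsto(h_t(x),t)$, so $\Phi_t(x,0)=(h_t(x),t)$ and hence $H_t(x)=h_t(x)$.

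I expect the main obstacle to be the vector-field extension in the second step: one must extend the velocity field off $W$ while rigidly preserving the unit $[0,1]$-component, so that the flow advances uniformly in $t$, and simultaneously arrange compact support, so that the flow is complete and the resulting maps $H_t$ are global diffeomorphisms of $N$. Both requirements rely on the compactness of $W$ through the tubular neighborhood theorem; without it the flow could fail to be complete and $H_t$ would only be defined on an open subset of $N$.
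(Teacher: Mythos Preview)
The paper does not actually prove this statement: it is quoted as the Isotopy Extension Theorem with a citation to Hirsch's \emph{Differential Topology} [1.3 Theorem, p.~180], and is used as a black box in the proof of the main result. So there is no ``paper's own proof'' to compare against.

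That said, your argument is essentially the standard proof one finds in Hirsch: pass to the track $\hat h\colon M\times[0,1]\to N\times[0,1]$, push forward $\partial/\partial t$ to obtain the velocity field along $W=\hat h(M\times[0,1])$, extend it to a global time-dependent vector field with unit time-component and compactly supported spatial component, and integrate. Your identification of the delicate point --- extending the field while keeping the $[0,1]$-component exactly $1$ and the $N$-component compactly supported --- is accurate, and compactness of $M$ (hence of $W$) is indeed what makes both the tubular neighborhood construction and the completeness of the flow work. One small point worth making explicit if you write this up: $W$ is a manifold with boundary inside the manifold-with-boundary $N\times[0,1]$, so you should either invoke a version of the tubular neighborhood theorem valid in that setting, or thicken the time interval slightly to $(-\varepsilon,1+\varepsilon)$ before extending and integrating.
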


For manifolds $M$ and $N$, let $\mathrm{Emb}(M,N)$ denote the subspace of $C^\infty(M,N)$ consisting of smooth embeddings.

\begin{lemma}
  \label{Emb local}
  Let $M$ be a closed submanifold of a manifold $N$. Then $\mathrm{Emb}(M,N)$ is a locally path-connected open subset of $C^\infty(M,N)$.
\end{lemma}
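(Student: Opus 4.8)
The plan is to prove the two assertions—openness and local path-connectedness—separately, using in an essential way that $M$ is closed, hence compact (so that the weak and strong $C^\infty$-topologies agree and there is no ambiguity in ``the $C^\infty$-topology'').

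\emph{Openness.} The content is the standard fact that any smooth $f\colon M\to N$ sufficiently $C^1$-close to a given embedding $f_0$ is again an embedding; since a $C^1$-neighborhood of $f_0$ is in particular a $C^\infty$-neighborhood, this shows $\mathrm{Emb}(M,N)$ is open (see \cite[Chapter 2]{H}). I would recall the argument: being an immersion is an open $C^1$-condition on the compact $M$, so $f$ is an immersion; for injectivity, one chooses $\delta>0$ so small that $f_0$ restricted to any subset of $M$ of diameter $<\delta$ is an embedding with uniformly controlled geometry, whence a $C^1$-small perturbation stays injective on such subsets, while on the compact set $\{(x,y)\in M\times M : d(x,y)\ge\delta\}$ one has $d(f_0(x),f_0(y))\ge 2\eta>0$, so any $f$ that is $C^0$-$\eta$-close to $f_0$ is injective there as well; finally, an injective continuous map out of the compact space $M$ is a homeomorphism onto its image. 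Hence $f\in\mathrm{Emb}(M,N)$, and $\mathrm{Emb}(M,N)$ is open in $C^\infty(M,N)$.

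\emph{Local path-connectedness.} Since an open subspace of a locally path-connected space is locally path-connected, and $\mathrm{Emb}(M,N)$ is open by the previous step, it suffices to show that $C^\infty(M,N)$ itself is locally path-connected. I would do this via the standard exponential chart: fix a Riemannian metric on $N$ with exponential map $\exp$, and for $f$ sufficiently $C^0$-close to a given $f_0$ set $Y_f(x)=\exp_{f_0(x)}^{-1}(f(x))\in T_{f_0(x)}N$. Because $M$ is compact, $f\mapsto Y_f$ is a homeomorphism from a $C^\infty$-neighborhood of $f_0$ in $C^\infty(M,N)$ onto a $C^\infty$-neighborhood of $0$ in the space $\Gamma(f_0^*TN)$ of smooth sections of the pulled-back tangent bundle, with inverse $Y\mapsto(x\mapsto\exp_{f_0(x)}(Y(x)))$. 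As $M$ is compact, $\Gamma(f_0^*TN)$ with the $C^\infty$-topology is a Fréchet space, in particular a locally convex topological vector space, so $0$ has a neighborhood basis of convex—hence path-connected—sets; transporting these back through the chart gives the required path-connected neighborhoods of $f_0$.

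\emph{Main obstacle.} Both halves are essentially classical, so the real work is bookkeeping: verifying that all the ``sufficiently $C^1$-close'' and ``uniformly controlled'' quantifiers in the injectivity argument are legitimate, and that the $\exp^{-1}$-construction genuinely defines a homeomorphism onto a full $C^\infty$-neighborhood of $0$ in $\Gamma(f_0^*TN)$. This is precisely where compactness of $M$ is used—for the uniform estimates in the openness proof, for the validity of the exponential chart on an entire $C^\infty$-neighborhood, and for the Fréchet (hence locally convex) structure of $\Gamma(f_0^*TN)$—so I would be careful to invoke ``$M$ closed'' at each of these points.
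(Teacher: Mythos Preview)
Your proposal is correct and follows essentially the same route as the paper: openness is taken as a standard $C^1$-stability fact, and local path-connectedness is obtained from the exponential chart identifying a neighborhood of $f_0$ with an open set in the Fr\'echet space $\Gamma(f_0^*TN)$. The paper simply cites \cite[Theorem 44.1]{KrM} for the Fr\'echet manifold structure on $\mathrm{Emb}(M,N)$ rather than writing out the chart, whereas you build the chart on $C^\infty(M,N)$ and then restrict to the open subset $\mathrm{Emb}(M,N)$; the underlying argument is the same.
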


\begin{proof}
  Clearly, $\mathrm{Emb}(M,N)$ is open in $C^\infty(M,N)$. By \cite[Theorem 44.1]{KrM}, $\mathrm{Emb}(M,N)$ is a Fr\'echet manifold locally modeled on the Fr\'echet space of smooth sections of $TN\vert_M$. Thus the statement follows.
\end{proof}

Now we are ready to prove Theorem \ref{main}.

\begin{proof}
  [Proof of Theorem \ref{main}]
  Take any simplex $\sigma$ of $K$. Since we are considering a smooth triangulation of $M$, $\mathrm{Int}(\sigma)$ is a submanifold of $M$. Then by the transversality theorem (Theorem~\ref{transversality theorem}), the subset of $C^\infty(L,M)$ consisting of transverse maps to $\mathrm{Int}(\sigma)$ is residual. Thus the intersection of those residual sets, where $\sigma$ ranges over all simplices of $K_{n-p-1}$, is dense since $C^\infty(L,M)$ is a Baire space as mentioned above. In particular, by Lemma \ref{Emb local}, there is an isotopy $h \colon L\times[0,1]\to M$, $(x,t) \mapsto h_t(x)$ such that $h_0$ is the inclusion of $L$ into $M$ and $h_1$ is an embedding of $L$ into $M$ which is transverse to the interior of every simplex of $K_{n-p-1}$. Note that since $\dim L +\dim K_{n-p-1}<\dim M$, we have $h_1(L)\cap K_{n-p-1}=\emptyset$. Now, by the isotopy extension theorem (Theorem~\ref{isotopy extension theorem}), the isotopy $h_t$ extends to a smooth map $H \colon M\times[0,1]\to M$, $(x,t) \mapsto H_t(x)$ such that $H_0=1_M$ and $H_t$ is a diffeomorphism for each $t\in[0,1]$. Thus we get a diffeomorphism
  \[
    H_1\colon M-L\to M-h_1(L)
  \]
  where $K_{n-p-1}\subset M-h_1(L)$. Therefore since $M-L$ is embeddable into $\R^m$, so is $M-h_1(L)$, hence $K_{n-p-1}$, completing the proof.
\end{proof}

Let us consider connected sums. The following higher dimensional version of Schoenflies theorem is proved in \cite{B}.

\begin{lemma}
  \label{Schoenflies theorem}
  Let $f\colon S^{n-1}\times[0,1]\to S^n$ be an embedding. Then there is a homeomorphism $F\colon S^n\to S^n$ such that $F(f(S^{n-1}\times\frac{1}{2}))$ is the equator of $S^n$.
\end{lemma}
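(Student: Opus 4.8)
The plan is to reduce the statement to the generalized Schoenflies theorem, whose whole purpose is to handle a two-sidedly collared sphere, and then to repackage its conclusion as a self-homeomorphism of $S^n$ carrying our sphere to the equator. First I would isolate the sphere of interest, $\Sigma := f(S^{n-1}\times\{\tfrac12\})$, and record that the hypothesis on $f$ makes $\Sigma$ \emph{bicollared}: the restriction of $f$ to $S^{n-1}\times(0,1)$, suitably reparametrised, is an open embedding onto a neighbourhood of $\Sigma$ sending $S^{n-1}\times\{\tfrac12\}$ to $\Sigma$, i.e. a two-sided collar. This local flatness is exactly the input that distinguishes the generalized Schoenflies theorem from its naive (and false in this generality) analogue, and it is the reason the lemma is stated with the thickening $S^{n-1}\times[0,1]$ rather than a bare sphere.

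Second, I would invoke the main theorem of \cite{B}: a bicollared $(n-1)$-sphere $\Sigma\subset S^n$ separates $S^n$ into two components whose closures $D_1,D_2$ are each homeomorphic to the closed $n$-ball $B^n$, with $S^n=D_1\cup D_2$ and $D_1\cap D_2=\Sigma$. I regard this as the genuine obstacle: showing that each complementary side is actually a ball is the entire content of the generalized Schoenflies problem, and it fails without the bicollar, since the Alexander horned sphere bounds a non-ball on one side. In a self-contained write-up all the difficulty sits here; since \cite{B} supplies this separation-into-balls statement, I would cite it and build the rest on top.

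Third, I would assemble the desired homeomorphism by gluing, the only remaining issue being to reconcile the two ball identifications along $\Sigma$. Fix homeomorphisms $g_i\colon D_i\to B^n$ and write the standard sphere as $S^n=B_+\cup_{S^{n-1}}B_-$, the union of two closed hemispheres along the equator $S^{n-1}$. Declaring $F|_{D_1}:=g_1\colon D_1\to B_+$ already sends $\Sigma$ onto the equator, so I only need the map chosen on $D_2$ to agree with $g_1$ along $\Sigma$. The two boundary homeomorphisms $g_1|_\Sigma$ and $g_2|_\Sigma$ of $\Sigma$ onto $S^{n-1}$ need not coincide, but $(g_1|_\Sigma)\circ(g_2|_\Sigma)^{-1}$ is a self-homeomorphism of $S^{n-1}$, which the Alexander trick (radial coning) extends to a self-homeomorphism $k$ of $B_-$. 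Setting $F|_{D_2}:=k\circ g_2\colon D_2\to B_-$, its restriction to $\Sigma$ is $(g_1|_\Sigma)\circ(g_2|_\Sigma)^{-1}\circ(g_2|_\Sigma)=g_1|_\Sigma$, matching $F|_{D_1}$ on $\Sigma$. Hence the two pieces glue to a well-defined continuous bijection $F\colon S^n\to S^n$ with $F(\Sigma)$ the equator, and since $S^n$ is compact and Hausdorff, $F$ is automatically a homeomorphism, completing the proof.
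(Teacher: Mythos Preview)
Your argument is correct. The paper does not actually supply a proof of this lemma: it simply attributes the statement to Brown's paper \cite{B} and moves on. What you have written is the standard derivation of the lemma from the main theorem of \cite{B}, namely that a bicollared $(n-1)$-sphere in $S^n$ bounds a closed $n$-ball on each side; the gluing step via the Alexander trick is exactly the way one promotes ``each side is a ball'' to ``there is a global homeomorphism of $S^n$ carrying $\Sigma$ to the equator''. So you have provided the detail the paper omits, and by the same route the paper implicitly intends.
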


\begin{proof}
  [Proof of Proposition \ref{connected sum}]
Clearly, it is sufficient to prove the $k=2$ case. For $i=1,2$, we can take a smooth closed ball $B_i$ in $M_i$ such that $B_i \cap L_i =\emptyset$. Let $\alpha \colon \partial B_1 \to \partial B_2$ be a diffeomorphism. We consider the connected sum $X = (M_1 - L_1) \# (M_2 - L_2)$ by identifying $\partial B_1$ and $\partial B_2$ via $\alpha$.

By assumption, there is a continuous embedding $f_i \colon M_i - L_i \to S^n$. Then, by Lemma~\ref{Schoenflies theorem}, there is a homeomorphism $F_i \colon S^n \to S^n$ such that $F_i (f_i (\partial B_i))$ is the equater $S^{n-1}$ of $S^n$. We may assume that $F_1(f_1(M_1- L_1 - \interior B_1))$ is in the upper hemisphere $S^n_+$ and $F_2(f_2(M_2- L_2 - \interior B_2))$ is in the lower hemisphere $S^n_-$. Let $\phi \colon S^n_- \to S^n_-$ be a homeomorphism that extends the composition of homeomorphisms
\[ S^{n-1} \xrightarrow{(F_2 \circ f_2 |_{\partial B_2})^{-1}} \partial B_2 \xrightarrow{\alpha^{-1}} \partial B_1 \xrightarrow{F_1 \circ f_1} S^{n-1}.\]
Here we regard $S^{n-1}$ as the equator of $S^n$. Then, for $x \in \partial B_1$, we have
\[ \phi \circ F_2 \circ f_2 (\alpha(x)) = F_1 \circ f_1(x).\]
Thus, by gluing $F_1 \circ f_1$ and $\phi \circ F_2 \circ f_2$, we obtain an injective continuous map
\[\Psi \colon X = (M_1 - L_1) \# (M_2 - L_2) \to S^n.\]

Set $A_i = M_i - L_i - \interior B_i$ for $i = 1,2$. Note that $\Psi|_{A_i}$ is an embedding for $i = 1,2$, and that $X = A_1 \cup A_2$. Since $\Psi (A_1) = S^n_+ \cap \Psi(X)$ and $\Psi(A_2) = S^n_- \cap \Psi(X)$ are closed subsets of $\Psi(X)$, it follows that $\Psi$ is an embedding. This completes the proof.
\end{proof}

\end{document}